\theoremstyle{plain}
\newtheorem{thm}{Theorem}[section]
\newtheorem{lem}[thm]{Lemma}
\newtheorem{prop}[thm]{Proposition}
\newtheorem{cor}[thm]{Corollary}
\newtheorem{obs}[thm]{Observation}
\theoremstyle{definition}
\newtheorem{defn}{Definition}[section]
\newcommand{\CC}{$\mathcal{C}$}
\newcommand{\NN}{\mathbb{N}}
\providecommand{\keywords}[1]
{
  \small	
  \textbf{\textit{Keywords:}} #1
}
\providecommand{\classification}[1]
{
  \small	
  \textbf{\textit{Classification:}} #1
}
\title{Duality, $\chi^<$-Boundedness and Order Density of Ordered Graphs}
\author{
Michal \v{C}ert\'{\i}k, Jaroslav Ne\v{s}et\v{r}il \\ 
Computer Science Institute, Faculty of Mathematics and Physics \\
Charles University\\
Prague, Czech Republic \\
}
\date{\displaydate{date}}
\begin{document}

\maketitle

\begin{abstract}

  We show that there exists only one duality pair for ordered graphs. We will also define a corresponding definition of $\chi^<$-boundedness for ordered graphs and show that all ordered graphs are $\chi^<$-bounded and prove an analogy of Gy\'arf\'as-Sumner conjecture for ordered graphs. 
  We also prove an analogy of Sparse Incomparability Lemma for ordered graphs. 
  We then use this result to show classes of ordered graphs that form a dense order under ordered homomorphisms. We also show that compared to graphs, ordered graphs have more gaps, defined by consecutive monotone matchings and by even more generic pairs of ordered graphs differing by one isolated edge.
  
\end{abstract}

\keywords{Ordered Graphs, Homomorphisms, Singleton Duality, $\chi$-Boundedness, Order Density}

\classification{05C60, 06D50}

\section{Introduction}


An \emph{Ordered Graph} is an undirected graph whose vertices are totally ordered. Thus, the ordered graph $G$ is a triple $G = (V,E,\le_G)$ (see Figure~\ref{fig:OrdHomsInterval}).



In this paper, we consider the homomorphisms of ordered graphs. These are defined as edge- and order-preserving mappings and they are naturally related to ordered chromatic number (which in turn naturally relates to extremal results, see e.g.~\cite{Pach2006}).

For ordered graphs $G=(V,E,\le_G)$ and $G'=(V',E',\le_{G'})$, an \emph{Ordered Homomorphism} is a mapping $f:V\to V'$ preserving both edges and orderings. Explicitly, $f$ satisfies
\begin{enumerate}
    \item $f(u)f(v) \in E'$ for all $uv \in E$,
    \item $f(u) \le_{G'} f(v)$ whenever $u \le_{G} v$.
\end{enumerate}

The existence of ordered homomorphism between ordered graphs $G$ and $H$ will be denoted by $G\to H$ (see Figure~\ref{fig:OrdHomsInterval}).

\begin{figure}[ht]
\begin{center}
\includegraphics[scale=0.7]{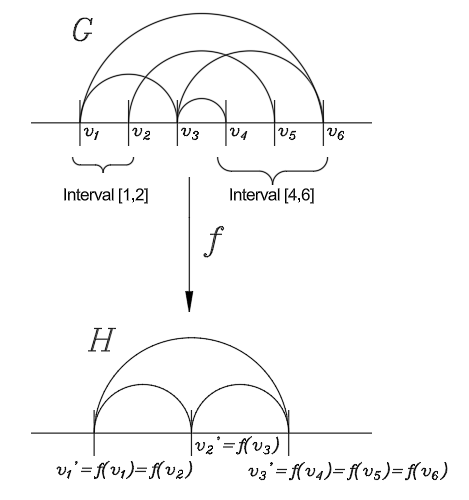}
\end{center}
\caption{Ordered Homomorphism $f$ and Independent Intervals.}
\label{fig:OrdHomsInterval}
\end{figure}

\section{Motivation}

Ordered graphs commonly appear in many different settings: extremal theory~\cite{Pach2006,conlon2016ordered}, category theory~\cite{Hedrlín1967,nesetril2016characterization}, Ramsey theory~\cite{Nesetril1996,Hedrlín1967,balko2022offdiagonal,Balko_2020}, among others. Recently, it has been shown that the concept of twin width in graphs corresponds to NIP ("not the independence property") classes of ordered graphs (~\cite{bonnet2021twinwidth, bonnet2024twinwidth}), thereby linking graph theory with model theory.

The richness of the theory of ordered graphs is evident not only in its conceptual depth and the difficulties it presents, but also in its wide-ranging applications throughout science and technology. Relevant studies cover a broad spectrum of fields, including physics \cite{verbytskyi2020hepmc3-80a}, medicine and biology \cite{goerttler2024machine-7eb}, large language models \cite{ge2024can-da2}, neural networks \cite{guo2019seq2dfunc-c64}, machine learning \cite{goerttler2024machine-7eb}, self-supervised learning \cite{LimOrderLearning2020}, data analysis and subspace clustering \cite{xing2025block-diagonal-073}, systems and networks \cite{li2015understanding-1cc}, software optimization \cite{romansky2020approach-60b}, malware detection \cite{thomas2023intelligent-c25}, business process management \cite{kourani2023business-759}, workflow models \cite{kourani2023scalable-a0f}, decision making \cite{wang2022improved-f6a}, dynamic system call sandboxing \cite{zhang2023building-6de}, fault tolerance \cite{chen2023pgs-bft-dcb}, blockchains \cite{malkhi2023bbca-chain-bfa}, curriculum development \cite{kuzmina2020curriculum-cdf}, multi-linear forms \cite{bhowmik2023multi-linear-2f5}, ordered graph grammars \cite{brandenburg2005graph-grammars-2fe}, rigidity theory \cite{connelly2024reconstruction-912}, shuffle squares \cite{grytczuk2025shuffle-a46}, and tilings \cite{balogh2022tilings-d8d}, among many others.

In relation to the aforementioned research, homomorphisms of ordered graphs provide both validation and extension: although they impose stricter conditions in comparison to standard homomorphisms (see, for example, \cite{HellNesetrilGraphHomomorphisms}), they also exhibit their own unique complexity (see, for instance \cite{Axenovich2016ChromaticNO,Guerra2012,braun2013cellular-998,nie2023asymptotic-fa9,bose2004ordered-e8c,certikorderedgraphsphd2025, nescerfelrza2023Systems}. Ordered homomorphisms compose, and thus most of the categorical definitions can be considered without any changes; see, e.g.~\cite{HellNesetrilGraphHomomorphisms}.

The exploration of complexities and parameterized complexities concerning ordered graphs and their homomorphisms has also been examined from multiple perspectives. Recently, \cite{kun2025dichotomy-dd6} has shown that ordering problems for graphs defined by finitely many forbidden ordered subgraphs capture the class $\mathbf{NP}$. In \cite{duffus1995computational-7a8}, the complexities of decision problems involving ordered graphs and their subgraphs are studied. We address in \cite{certik_complexity_2025,certik_core_2025, certik_matching_2025} the complexities and parameterized complexities of fundamental problems related to ordered graphs, their homomorphisms, and their cores.




\section{Statement of Results}

In this article, we characterize homomorphism dualities and we also consider Gy\'arf\'as-Sumner type problems ($\chi$-boundedness) and in the context of ordered graphs we fully characterize it. It is perhaps surprising that these questions, which are difficult for unordered graphs and homomorphisms, find a simple and transparent solution for
ordered graphs and their homomorphisms. 

We also examine the order density defined by homomorphisms of ordered graphs. For these purposes, we prove and apply the Sparse Incomparability Lemma for ordered graphs. We will also show that, compared to the order density defined by homomorphisms of unordered graphs, the order defined by ordered homomorphisms has many more gaps.

Section \ref{sec:Preliminaries} focuses on introducing essential definitions and presents some of the foundational results on the coloring of ordered graphs (analogous to the coloring of unordered graphs). This is proved to be feasible in polynomial time (in fact linear) for ordered graphs. 

In Section ~\ref{chapt:Dualities} Theorem~\ref{thm:Uniq} we provide the duality result in an ordered setting. The key property is played by monotone matching. Note that monotone matching also plays a role in the Ramsey context (see~\cite{balko2023ordered}).

In Section~\ref{chapt:ChiBound} we deal with questions which subgraphs are unavoidable in a large chromatic number.

Formulating it dually, we ask when the chromatic number of a graph is bounded as a function of its subgraphs (for not ordered graphs this amounts to bound chromatic number as a function of the clique number, which leads to $\chi$-bounded classes and Gy\'arf\'as-Sumner conjecture).

For ordered graphs, the situation is easier, and we prove the analogous statement, Theorem~\ref{thm:Gyarfas}, with the following definition of some of such unavoidable graphs (see also Figure~\ref{fig:MnMLR}).

\begin{defn}
    \emph{Monotone matching} $M_n$ has points $a_i, b_i, i=1,\ldots, n$, with ordering $a_1<b_1<a_2<b_2<\ldots<a_n<b_n$ and edges $\{a_i,b_i\}, i=1,\ldots,n$. $a_i$ are left vertices, $b_i$ are right vertices.
    
\begin{itemize}
    \item $M^{LR}_n$ is $M_n$ together with all edges $\{a_i, b_j\}, i<j$.
    \item $M^{RL}_n$ is $M_n$ together with all edges $\{b_i, a_j\}, i<j$.
    \item $M^{+}_n$ is just $M^{LR}_n \cup M^{RL}_n$.
\end{itemize}
\end{defn}

\begin{figure}[!htbp]
\begin{center}
\includegraphics[scale=1]{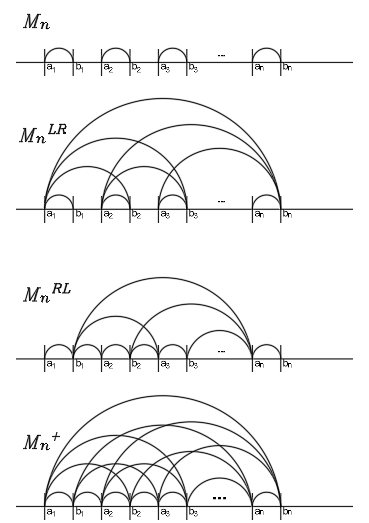}
\end{center}
\caption{$M_n, M^{LR}_n, M^{RL}_n$ and $M^{+}_n$}
\label{fig:MnMLR}
\end{figure}

\begin{restatable*}{thm}{chibound}
\label{thm:Gyarfas}
Let $G$ be an ordered graph that does not contain any of the following graphs as induced subgraphs:
$$
K_m, M_n, M^{RL}_k, M^{+}_l, n,k\ge 2, m,l\ge 3.
$$

Then there exists $f(k,l,m,n):\mathbb{N}^4\to\mathbb{N}$ such that $\chi^<(G)\le f(k,l,m,n)$.
\end{restatable*}

In Section~\ref{sec:sparse}, we prove an analogue of the Sparse Incomparability Lemma for ordered graphs (see, e.g., ~\cite{HellNesetrilGraphHomomorphisms},~\cite{d0f6743d-1cbb-3b59-aa4a-4a1f20ecc7bb},~\cite{NESETRIL1989133},~\cite{NESETRIL2004161} for unordered graphs).

In Section~\ref{sec:density}, we then use this result to show that ordered homomorphisms, for ordered graphs consisting of the connected components of order at least three, define a dense order. We also show some other classes of ordered graphs defining a dense order, and that pairs of consecutive monotone matchings and ordered graphs differing by one independent edge define gaps in this order (gap in the sense of two ordered graphs $G$ and $H, G<H$, for which there is no ordered graph $F$, such that $G<F<H$, with $<$ defined by ordered homomorphism order, where we denote $G<H$ for $G\to H$ and $H\not\to G$).

\section{Ordered Coloring}
\label{sec:Preliminaries}

Let us start this section with a definition of an \emph{independent interval}, which will be a set of independent vertices; explicitly, if $\le_G$ is given as $v_1, v_2, \ldots, v_n$, then an $[i,j]$ independent interval in $G$ is the set $\{v_i,v_{i+1},\ldots,v_j\}$ that does not contain any edge of $G$ (see Figure~\ref{fig:OrdHomsInterval}).

We then define a \emph{double} as a pair of consecutive vertices $v_i$ and $v_{i+1}$, connected by an edge, in ordered graphs. Then $M_k$ will be an ordered graph that has $k$ doubles, $k$ edges, and $2k$ vertices, and we will call $M_k$ a \emph{monotone matching}.

Next, we define a \emph{(Singleton) Homomorphism Duality} as a pair of graphs $F, D$ that satisfy

$$
F\not\to G \text{ if and only if } G\to D
$$

for every graph $G$.

For graphs and relational structures, the dualities are characterized in~\cite{NESETRILTARDIF200080}.

An \emph{ordered core} of an ordered graph $G$ will then be defined as as the smallest ordered subgraph $H$ of $G$ such that $G\to H$. (Equivalently, this is the smallest ordered retract of $G$.)

Let us continue this section with a definition of \emph{(ordered) chromatic number} $\chi^<(G)$ being the minimum $k$ such that $V(G)$ can be partitioned into $k$ disjoint independent intervals. Notice that for ordered graphs this is the order of the smallest homomorphic image and, alternatively, the minimum $k$ such that $G\to K_k$. ($K_k$ being a complete graph with fixed linear ordering.) We shall also call $\chi^<(G)$ an \emph{(ordered) coloring} of $G$.

Unlike the chromatic number of (unordered) graphs, the ordered chromatic number can be determined by a simple greedy algorithm. This is formulated in the following as Proposition~\ref{prop:GreedyAlg} (which may be folklore).

\emph{Greedy Algorithm} is a natural one: Process the vertices in the given order and color each vertex by the smallest available color that fits the rule.
What is the rule? For graph $G$ with ordering $\le_G$ each color has to be an independent interval in $\le_G$.

We have the following:

\begin{prop} \label{prop:GreedyAlg}
For every ordered graph $G$, the greedy algorithm finds $\chi^<(G)$ in polynomial time.
\end{prop}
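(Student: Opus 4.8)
The plan is to show two inequalities: that the greedy algorithm never uses fewer colours than $\chi(G)$ (trivial, since any proper colouring into intervals witnesses an upper bound on $\chi(G)$, and greedy produces one such colouring, so $\chi(G) \le \mathrm{greedy}(G)$), and, for the substantive direction, that greedy never uses more than $\chi(G)$ colours. For the latter I would argue by contradiction: suppose an optimal colouring uses colours $1, \ldots, \chi(G)$ as intervals $I_1 < I_2 < \cdots < I_{\chi(G)}$ (consecutive blocks in $\le_G$), while greedy uses at least $\chi(G)+1$ colours. Let $v$ be the first vertex to which greedy assigns colour $\chi(G)+1$.

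**The key structural observation.**

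The rule greedy obeys is that each colour class must be an \emph{independent interval}: a consecutive block of pairwise non-adjacent vertices. So greedy assigns $v$ the smallest colour $c$ such that (i) the current colour class $c$ together with $v$ is still a consecutive interval — which, since greedy processes vertices in order, just means colour $c$ has been used and its last-used vertex is the immediate predecessor region of $v$ in the sense that no other colour has been used on a vertex between them — and (ii) $v$ is non-adjacent to every vertex already coloured $c$. I would first nail down the precise invariant greedy maintains: after processing $v_1, \ldots, v_i$, the colour classes are intervals $[1, a_1], [a_1+1, a_2], \ldots$ partitioning $\{v_1, \ldots, v_i\}$, and each is independent. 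The crucial point is that because colours are \emph{intervals}, at the moment greedy colours $v$ the "available" colours are essentially only the single current last colour and the next new colour — colour $c < (\text{current max})$ cannot be reopened without violating the interval property. So greedy is forced to extend the current last interval if it can, and open a new colour only if $v$ is adjacent to some vertex in the current last interval.

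**Completing the argument.**

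Hence greedy opens colour $k+1$ at $v$ only when $v$ is adjacent to some vertex in greedy's current interval $[\,b, \text{position of } v - 1\,]$ numbered $k$. Now compare with the optimal intervals $I_1 < \cdots < I_{\chi(G)}$. I would show by induction on the colour index that greedy's $j$-th interval starts no earlier than $I_j$ starts — equivalently, greedy's boundaries are "at least as late" as the optimal ones — using that whenever greedy is forced to break (because of an edge inside its current block), the optimal colouring must have broken no later, since that edge also forbids the two endpoints from sharing an optimal colour. Pushing this induction through, greedy's $\chi(G)$-th interval already contains $v$, contradicting that $v$ received colour $\chi(G)+1$. The main obstacle is stating the "intervals force monotone boundaries" invariant cleanly and handling the bookkeeping of which vertex triggers each new colour; once the invariant "greedy's $j$-th cut point $\ge$ optimal's $j$-th cut point" is set up correctly, the induction step is a short case analysis on whether greedy extends or breaks at each vertex.
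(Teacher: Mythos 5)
Your proposal is correct and rests on the same key fact as the paper's proof: greedy's $j$-th interval is taken maximal, so it must extend at least as far as the $j$-th interval of any optimal colouring (the paper states this as $I_1\subseteq I'_1$ and then peels off $I'_1$ and inducts on $\chi(G)$, whereas you unroll that induction into a single "greedy's cut points dominate the optimal cut points" invariant over the interval index). The two arguments are essentially the same, differing only in bookkeeping.
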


\begin{proof}
    Put $\chi^<(G)=k$. Obviously, by the algorithm, the greedy algorithm finds a $k'$-coloring for $k'\ge k$.

    We prove $k'=k$ by induction on $k$. For $k=1$, the graph is just a single independent interval.

    In the induction step, let $\chi^<(G)=k+1$ and let $I_1, I_2, \ldots, I_{k+1}$ be an optimal coloring. Consider the independent interval $I'_1$ given by the greedy algorithm. Clearly $I_1\subseteq I'_1$ and thus $G'=G-I'_1$ (with the vertex set $I'_1$ deleted) satisfies $\chi^<(G')=k$ (it cannot be smaller than $k$ since then $\chi^<(G)$ would be smaller than $k+1$) and the greedy algorithm also produces a $k$-coloring. Thus, also $k+1$ is the result of the greedy algorithm.
\end{proof}

It follows that for any $G$, determining $\chi^<(G)$ is in $\mathcal{P}$, $\mathcal{P}$ being a class of decision problems that can be solved in polynomial time. The Greedy algorithm goes over all the vertices of $G$ and checks at each step whether the vertex is not connected to some of the previous vertices (with respect to the ordering of $G$), therefore, the complexity of an algorithm is at most $\mathcal{O}(|V(G)|^2)$. Alternatively, as at each vertex $v$ we look back at vertices connected to $v$, and we do not consider any of the edges more than once, the complexity of the Greedy algorithm is (linear) $\mathcal{O}(|V(G)| + |E(G)|)$.

Let us continue this section by defining $P_m$ as an ordered graph on the $m$ vertices with natural ordering and $E(P_m)=\{\{v_iv_{i+1}\}|i=1,\ldots,m-1;v_i\in V(P_m)\}$. We will call $P_m$ a \emph{directed path}. Notice that this definition differs from the definition of path for unordered graphs, where $m$ denotes the number of edges instead of the number of vertices. We also define an edge set $E'$ in an ordered graph as \emph{non-intersecting} if for every two distinct edges $e_1=\{v_1,v_2\}, v_1<v_2$ and $e_2=\{v_3,v_4\}, v_3<v_4$ in $E'$, either $v_2\le v_3$ or $v_4\le v_1$.

\begin{obs}
\label{Obs:ConsEdges}
    Let $G$ be an ordered graph with $k$ non-intersecting edges. Then if there exists an ordered homomorphism from $G$ to $H$, $H$ must also contain at least $k$ non-intersecting edges.
\end{obs}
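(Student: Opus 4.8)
The plan is to take a witnessing family of $k$ non-intersecting edges in $G$, push it forward along the homomorphism, and verify that the images again form such a family in $H$. The one thing worth setting up carefully first is a convenient indexing. Since any two distinct non-intersecting edges lie one wholly to the left of the other (they may share at most an endpoint), sorting the $k$ edges by their left endpoints yields a listing $e_1,\dots,e_k$ with $e_i=\{a_i,b_i\}$, $a_i<_G b_i$. I would then observe that in fact $b_i\le_G a_{i+1}$ for each $i$: the $a$'s are strictly increasing (two distinct non-intersecting edges cannot share a left endpoint), so if $b_i>_G a_{i+1}$ then non-intersection of $e_i$ and $e_{i+1}$ would force $b_{i+1}\le_G a_i <_G a_{i+1} <_G b_{i+1}$, a contradiction.

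Next I would invoke the homomorphism $f\colon G\to H$. For each $i$, edge preservation gives $f(a_i)f(b_i)\in E(H)$, and since $H$ is a (loopless) graph this already forces $f(a_i)\neq f(b_i)$; together with order preservation, which gives $f(a_i)\le_H f(b_i)$, we get $f(a_i)<_H f(b_i)$. Order preservation applied to $b_i\le_G a_{i+1}$ gives $f(b_i)\le_H f(a_{i+1})$. Chaining these inequalities yields
\[ f(a_1)<_H f(b_1)\le_H f(a_2)<_H f(b_2)\le_H\cdots\le_H f(a_k)<_H f(b_k). \]

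Finally I would read off the conclusion from this chain. For $i<j$ it gives $f(b_i)\le_H f(a_j)$, which is exactly the non-intersection condition for the edges $f(a_i)f(b_i)$ and $f(a_j)f(b_j)$ (written with endpoints in increasing order, namely $f(a_i)<_H f(b_i)$ and $f(a_j)<_H f(b_j)$). The same string of inequalities $f(a_i)<_H f(b_i)\le_H f(a_j)<_H f(b_j)$ also shows these $k$ edges are pairwise distinct, since two coinciding edges would have equal least and equal greatest endpoints. Hence $f(a_1)f(b_1),\dots,f(a_k)f(b_k)$ are $k$ non-intersecting edges of $H$. There is no genuine obstacle in this argument; the only point requiring a line of care is the bookkeeping that guarantees the pushed-forward edges really are $k$ distinct edges rather than fewer, which is handled by the strict inequalities coming from loop-freeness.
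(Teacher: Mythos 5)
Your proof is correct and follows the same idea as the paper, which simply asserts that the claim follows from order- and edge-preservation; you have merely written out in full the details (sorting the edges left to right, using loop-freeness to get strict inequalities, and checking distinctness of the images) that the paper leaves implicit. No discrepancy to report.
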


\begin{proof}
    By definition, the ordered homomorphism from $G$ to $H$ must preserve the ordering $<_G$ and the edges of $G$, therefore the observation follows.
\end{proof}

Let $\lambda(G)$ be the maximum number of non-intersecting edges in $G$, then it follows that $\lambda(G)$ is monotone invariant for ordered homomorphisms. $\lambda(G)$ for ordered graphs here plays the role of $\omega(G)$ for unordered graphs.

The following observation is also not difficult to see. We define $A_m(G)$ as an ordered graph on $m=\chi^<(G)$ vertices, resulting from running the Greedy Algorithm on an ordered graph $G$.

\begin{obs}
\label{obs:AgPgGreedy}
    Let $G$ be an ordered graph, and $A_m(G)$ be an ordered graph. Then a directed path $P_{m}$ is an ordered subgraph of $A_m(G)$.
\end{obs}

\begin{proof}
    We shall prove this by contradiction. Assume $A_m$ does not contain a directed path $P_{m}$. Then there exist two independent vertices $v_i$ and $v_{i+1}, i\in [m-1]$ in $A_m$. But then these vertices should have been mapped to one vertex, a contradiction.
\end{proof}

We will now prove another related result that will be useful later on.

\begin{lem}
\label{lem:NonIntersectEdgesinGisAm}
Let $G$ be an ordered graph, $A_m(G) \in \mathcal{A}_m(G)$, and $M_k$ be monotone matchings. Then if there exists an ordered homomorphism $M_k\to A_m(G)$, then there is an ordered homomorphism $M_k\to G$.
\end{lem}

\begin{proof}
    We know from the previous Observation ~\ref{obs:AgPgGreedy}, that $A_m(G)$ contains $P_m$, therefore $A_m(G)$ contains $m-1$ non-intersecting edges. We also know from the Observation ~\ref{Obs:ConsEdges}, that if $M_k\to A_m(G)$, then $A_m(G)$ must contain at least $k$ non-intersecting edges, since all $k$ edges in $M_k$ are non-intersecting, and $k$ edges in monotone matching can always map to any ordered graph with at least $k$ non-intersecting edges.

    We prove this statement by showing that if $A_m(G) \in \mathcal{A}_m(G)$, then $G$ must also contain $m-1$ non-intersecting edges. Then if $M_k\to A_m(G)$, then $M_k$ can also map to these $m-1$ non-intersecting edges in $G$.
    
    For brevity, and without risk of confusion, we will denote $A_m(G)$ simply as $A_m$.

    W.l.o.g., let us assume that $k=m-1$, since if $k<m-1$, then the ordered homomorphism $M_k\to G$ will be easier to find, and if $k>m-1$, then, from Observation ~\ref{Obs:ConsEdges}, $M_k\not\to A_m$.


    W.l.o.g., let us assume that $A_{m}$ was produced by the left Greedy Algorithm, since the right Greedy Algorithm produces an ordered graph on the same number of vertices, and with the same number of non-intersecting edges. We notice that by the nature of the left Greedy algorithm, for each two consecutive intervals $I_i=[j_1,j_2]$ and $I_{i+1}=[j_2+1,j_3],i\in [m]$ of $G$ produced by this algorithm, vertex $j_2+1$ of interval $I_{i+1}$ is connected to some of the vertices in the interval $I_i$. Otherwise, the left Greedy algorithm would include this vertex in interval $I_i$.

    For each interval $I_i,i=2,3,\ldots,m$ of $G$ produced by this algorithm, we will therefore choose the edge connecting its first vertex with some vertex in the previous interval, getting $m-1$ non-intersecting edges in $G$.
    
    To see that these $m-1$ edges in $G$ are non-intersecting, assume that there exist two consecutive intervals $I_i=[j_1,j_2]$ and $I_{i+1}=[j_2+1,j_3],i\in [m]$ of $G$ produced by the algorithm, where their respective chosen edges intersect: edge $e_i$ chosen for the interval $I_i$ connecting the vertex $j_1$ with vertex in interval $I_{i-1}$ and the edge $e_{i+1}$ chosen for the interval $I_{i+1}$ connecting the vertex $j_2+1$ with vertex in interval $I_{i}$. Since the edge $e_i$ connects the vertex $j_1$ with a vertex in interval $I_{i-1}$, and the edge $e_{i+1}$ connects the vertex $j_2+1$ with some vertex in interval $I_{i}=[j_1,j_2]$, the only way these edges can intersect is for the edge $e_{i+1}$ to be incident to some vertex $v\in I_i$, such that $v<j_1$. But the edge $e_{i+1}$ is incident with vertex $v\ge j_1$, a contradiction. 
    
    Therefore there are $m-1$ non-intersecting edges in $G$ where $M_{k}$ can map.

\end{proof}

\section{Dualities of Ordered Graphs}
\label{chapt:Dualities}

For unordered graphs and unordered relational structures, the dualities are characterized in~\cite{NESETRILTARDIF200080}. Additional results on the topic can also be found in, for example, ~\cite{nesetriltardifdual2005}, ~\cite{nesetril2021fromsparse}, and ~\cite{Foniok2008}.

In this section, we provide the characterization in the ordered setting. The key role is played by ordered monotone matching. Note that matchings also play a role in the ordered Ramsey context (see, e.g., ~\cite{balko2023ordered}).

We will show that the following pairs of ordered cores are the only singleton homomorphism dualities of ordered graphs.



\begin{thm}
\label{thm:Uniq}
An ordered graph $G$ is a core if and only if there is no ordered homomorphism from $G$ to a proper ordered subgraph of $G$. Every ordered graph is homomorphically equivalent to a unique ordered core.
\end{thm}

\begin{proof}
We prove this statement in two steps. First, we show that ordered cores $M_k$ and $K_k$ form a singleton homomorphism duality pair, and then we prove that this is the only singleton duality for ordered graphs.

We first show that if $M_k\to G$, then $G\not\to K_k$. From the Observation~\ref{Obs:ConsEdges}, we see that $M_k\not\to K_k$, since $\lambda(K_k)=k-1$ and $\lambda(M_k)=k$. Therefore, the first implication holds by transitivity of existence of homomorphisms.

On the other hand, if $M_k\not\to G$, then we use the greedy algorithm on $G$ and Proposition~\ref{prop:GreedyAlg} to obtain an ordered graph $A_g$ with a minimum number of $g$ vertices such that $G\to A_g$. From Observation ~\ref{obs:AgPgGreedy}, we know that $A_g$ contains a directed path $P_{g}$.

It is clear that $g-1<k$, as otherwise $M_k\to P_g\subseteq A_g$, and if $M_k\to A_g$ then $M_k$ can also map to $G$. This can be seen again using Observation~\ref{Obs:ConsEdges}, since if $g-1\ge k$, then $M_k\to A_g$ can map the edges of $M_k$ to the $g-1$ non-intersecting edges of $P_g$ in $A_g$, therefore, from Lemma ~\ref{lem:NonIntersectEdgesinGisAm}, $M_k$ could map to the $g-1$ non-intersecting edges in $G$. 

Hence, since $G\to A_g \to K_k$ (since $A_g$ is on $k$ or fewer vertices), we get $G\to K_k$.




%

We now show that $(M_k,K_k)$ is the only singleton homomorphism duality pair of ordered graphs.
Let $F$ and $H$ be ordered cores that for every ordered graph $G$, satisfy $F\not\to G$ if and only if $G\to H$.

Let $G$ be a monotone matching. We see that if $F\to G$, then $F$ must contain an independent interval partition that maps to a monotone matching. But since $F$ is a core, $F$ must be a monotone matchings.

Now, let us assume $F\not\to G$. But then $G\not\to H$ for any sufficiently large monotone matching $G$.

Let us now assume that $M_k\not\to G$ if and only if $G\to H$, for some ordered core $H$. But substituting $K_k$ and $H$ for $G$, we get $M_k\not\to K_k$ if and only if $K_k\to H$ and $M_k\not\to H$ if and only if $H\to K_k$, respectively. However, since $H$ and $K_k$ are cores, and from \cite{certik_core_2025} we know that ordered core of an ordered graph is unique, $H$ must be isomorphic to $K_k$.

\end{proof}

Once again drawing on parallels with unordered graphs, we may note that a compelling extension of this study might be investigating duality pairs for different classes of ordered graphs. We will not address it in this work.

\section{$\chi^<$-boundedness of Ordered Graphs}
\label{chapt:ChiBound}

For ordinary graphs, we recall that the $\chi$-bounded family $\mathcal{F}$ of graphs is one for which there is a function $f: \NN\to\NN$ such that, with $\omega(G)$ being an order of maximum clique in $G\in \mathcal{F}$, for every $\omega(G)$, $\chi(G)$ coloring is at most $f(\omega(G))$ (see, e.g., ~\cite{gyrfs1987problems-b30}).

Before we introduce a similar notion for ordered graphs, let us define the following measure.

\begin{defn}[$\eta(G)$]
\hfill

     Let $G$ be an ordered graph, then $\eta(G)$ is the size of the maximum ordered monotone matching subgraph $M_{\eta(G)}$ of $G$.
\end{defn}

Motivated by the Duality Theorem ~\ref{thm:Uniq}, let us now define a $\chi^<$\emph{-bounded} family $\mathcal{F}$ of ordered graphs, using $\eta(G)$ instead of $\omega(G)$.

\begin{defn}[$\chi^<$\emph{-bounded} Family $\mathcal{F}$ of Ordered Graphs]
\label{def:chibounded}
\hfill

     $\chi^<$\emph{-bounded} family $\mathcal{F}$ of ordered graphs is one for which there is a function $f$ such that for every $\eta(G), G\in \mathcal{F}$, $\chi^<(G)$ is at most $f(\eta(G))$.
\end{defn}

We now show that all ordered graphs are $\chi^<$-bounded.

\begin{thm}
\label{thm:chiboundedness}
Let $G$ be an ordered graph, then $\chi^< (G)\le 2\eta(G)+1$.
\end{thm}

\begin{proof}

Let us choose a monotone ordered matching subgraph $M_{\eta(G)}$ of $G$, and let us run the Greedy Algorithm on $G$. From Proposition~\ref{prop:GreedyAlg}, as a result of this procedure, we will get an ordered graph $A_g$, where $g=\chi^<(G)$. 

Now, for a contradiction, let us assume that $\chi^<(G)>2\eta(G)+1$. Then, from Proposition ~\ref{prop:GreedyAlg} and Observation ~\ref{obs:AgPgGreedy}, $A_g$ will have at least $2\eta(G)+2$ vertices and contain $P_{2\eta(G)+2}$, respectively. But $P_{2\eta(G)+2}$ contains a monotone matching of order $\eta(G)+1$, and from Lemma ~\ref{lem:NonIntersectEdgesinGisAm}, also $G$ would then need to contain a monotone matching of order $\eta(G)+1$, a contradiction.
\end{proof}

Let us now try to prove a stronger (induced) version of the statement, again borrowing an idea from the ordinary graphs - the \emph{Gy\'arf\'as–Sumner conjecture} (see ~\cite{GyarfasOnRamseyCoveringNumbers} and ~\cite{Sumner1981}). The conjecture states that for every tree $T$ and complete graph $K$, the graphs with neither $T$ nor $K$ as induced subgraphs can be properly colored using only a constant number of colors, depending on $T$ and $K$ only.

We shall replace the tree with the forbidden structures introduced in Definition ~\ref{OrdMonMatching} (see also Figure ~\ref{fig:MnMLR}).

\begin{defn}
\label{OrdMonMatching}
    \emph{Ordered Monotone Matching} $M_n$ has vertices $a_i, b_i, i=1,\ldots, n$, with ordering $a_1<b_1<a_2<b_2<\ldots<a_n<b_n$ and edges $\{a_i,b_i\}, i=1,\ldots,n$. $a_i$ are \emph{left} vertices, $b_i$ are \emph{right} vertices.
\begin{itemize}
    \item $M^{LR}_n$ is $M_n$ together with all edges $\{a_i, b_j\}, i<j$.
    \item $M^{RL}_n$ is $M_n$ together with all edges $\{b_i, a_j\}, i<j$.
    \item $M^{+}_n$ is just $M^{LR}_n \cup M^{RL}_n$.
\end{itemize}
\end{defn}


We will now prove the following statement.


\chibound

Advancing the proof, let us first define a set of \emph{incomparable} ordered graphs. 

\begin{defn}[\emph{Incomparable} Ordered Graphs]
\label{def:etaG}
\hfill

A set $\mathcal{G}$ of ordered graphs is \emph{incomparable}, if for every two ordered graphs $G,H\in \mathcal{G}$, $G$ is not an induced subgraph of $H$, and $H$ is not an induced subgraph of $G$.

\end{defn}

For fixed $n,k\ge 2, m,l\ge 3,$ we prove that $M_n, K_m, M^{RL}_k, M^{+}_l$ are incomparable ordered graphs and we determine their chromatic number.


\begin{prop}
\label{prop:non-chibound}
Let $k,l,m,n\in \mathbb{N}, n,k\ge 2, m,l\ge 3$ be fixed. Then 

$$M_n, K_m, M^{RL}_k, M^{+}_l$$ 

are incomparable ordered graphs, $M^{LR}_k$ is an induced ordered subgraph of $M^{RL}_{2k}$ and $\chi^<(M_n)=n+1, \chi^<(K_m)=m, \chi^<(M^{RL}_k)=2k, \chi^<(M^{+}_l)=2l$.
\end{prop}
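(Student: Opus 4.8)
The plan is to establish the two assertions of the proposition essentially independently: first the four chromatic numbers, then the twelve non-inclusions (both directions for each of the six pairs).

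For the chromatic numbers I would appeal to Proposition~\ref{prop:GreedyAlg} and read off what the greedy algorithm does on each graph. The value $\chi(K_m)=m$ is immediate. On $M_n$ the greedy algorithm produces the classes $\{a_1\},\{b_1,a_2\},\{b_2,a_3\},\ldots,\{b_{n-1},a_n\},\{b_n\}$ (each of them consecutive, independent, and maximal), so $\chi(M_n)=n+1$. For $M^{RL}_k$ and $M^+_l$ the key point is that \emph{any} two consecutive vertices in the order span an edge: each double $\{a_i,b_i\}$ is an edge, and $\{b_i,a_{i+1}\}$ is an edge in both graphs because $i<i+1$. Hence no independent interval can contain two vertices, every colour class is a singleton, and $\chi(M^{RL}_k)=2k$, $\chi(M^+_l)=2l$.

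For the non-inclusions the strategy is to separate the four graphs by hereditary (induced-subgraph-monotone) invariants, which disposes of eleven of the twelve required non-inclusions without ever mentioning the ordering. The clique number is $m\ge 3$ for $K_m$ but only $2$ for the bipartite graphs $M_n,M^{RL}_k,M^+_l$, so $K_m$ is an induced subgraph of none of them; conversely each of those three has a non-edge while every induced subgraph of $K_m$ is complete, so none of them is an induced subgraph of $K_m$. The maximum degree of $M_n$ is $1$, whereas $M^{RL}_k$ ($k\ge2$), $M^+_l$ ($l\ge3$) and $K_m$ ($m\ge3$) each contain a vertex of degree $\ge 2$, so none of these is an induced subgraph of $M_n$. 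Next, $M_n$ with $n\ge2$ induces two disjoint edges with no edge between them (on $\{a_1,b_1,a_2,b_2\}$), while $K_m$, the complete bipartite graph $M^+_l=K_{l,l}$, and $M^{RL}_k$ (whose $a$-neighbourhoods satisfy $N(a_1)\subseteq\cdots\subseteq N(a_k)$) contain no such configuration, so $M_n$ is an induced subgraph of none of them. Finally every induced subgraph of $M^+_l=K_{l,l}$ is again complete bipartite, while $M^{RL}_k$ ($k\ge2$) is not (for instance $a_1\not\sim b_2$ with $a_1,b_2$ in different parts), so $M^{RL}_k$ is not an induced subgraph of $M^+_l$.

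The one remaining non-inclusion, that $M^+_l$ is not an induced ordered subgraph of $M^{RL}_k$, is where the ordering is essential, and I expect it to be the real obstacle: as an unordered graph $K_{l,l}$ does occur induced in $M^{RL}_k$ (for example on $\{b_1,\ldots,b_l\}\cup\{a_l,\ldots,a_{2l-1}\}$), so only the alternating order of $M^+_l$ can rule it out. Here I would argue that an induced ordered embedding $\phi$ must carry the unique bipartition of $M^+_l=K_{l,l}$ onto the two bipartition classes $\{a_i\},\{b_i\}$ of $M^{RL}_k$; inspecting the images of $a_1,b_1,a_2,b_2$ (and, in one of the two cases, also of $a_3,b_3$, which is exactly where the hypothesis $l\ge 3$ enters), completeness of the bipartite graph forces a one-sided comparison between all chosen $a$-indices and all chosen $b$-indices of $M^{RL}_k$, whereas the order chain $a_1<b_1<a_2<b_2<\cdots$ forces those same indices into a strictly interleaved increasing chain, and the two requirements are incompatible. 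Since the four graphs are moreover pairwise non-isomorphic, this exhibits $M_n,K_m,M^{RL}_k,M^+_l$ as a non-induced set.
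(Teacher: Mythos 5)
Your proof is correct, but it takes a genuinely different route from the paper's. The paper stays inside the ordered setting throughout: it observes that every edge of $M^{RL}_k$ and $M^{+}_l$ has odd length in the vertex ordering (hence both are triangle-free), and then performs an explicit case analysis on ``path edges'' versus ``long edges'' to exclude induced copies of $M_2$, and separate ad hoc arguments (first vertex has no long edge to the right; parity of the first four vertices) to separate $M^{+}_l$ from $M^{RL}_k$. You instead dispose of eleven of the twelve non-containments by purely unordered hereditary invariants --- clique number, existence of a non-edge, maximum degree, induced-$2K_2$-freeness of $M^{RL}_k$ via the nested neighbourhoods $N(a_1)\subseteq\cdots\subseteq N(a_k)$ and of $M^{+}_l=K_{l,l}$, and closure of complete bipartite graphs under induced subgraphs. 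This is more modular and makes transparent that the ordering is genuinely needed only for the single case $M^{+}_l\not\hookrightarrow M^{RL}_k$ (since $K_{l,l}$ does occur unordered-induced in $M^{RL}_k$), and that this is also exactly where $l\ge 3$ enters --- consistent with the paper's own remark that $M^{+}_2$ embeds on $b_1,a_2,b_2,a_3$. Your final case is only sketched, but the sketch completes routinely: in the subcase $\phi(a_i)=b_{q_i}$, $\phi(b_i)=a_{p_i}$, completeness of $K_{l,l}$ forces $q_i\le p_j$ for all $i,j$, while the interleaving $b_{q_1}<a_{p_1}<b_{q_2}<a_{p_2}<b_{q_3}$ forces $p_1\le q_2<p_2\le q_3$, contradicting $q_3\le p_1$; the other subcase already fails at $l=2$. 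The chromatic-number computations agree with the paper's (greedy on $M_n$; in $M^{RL}_k$ and $M^{+}_l$ every pair of consecutive vertices is an edge, so all colour classes are singletons).
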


\begin{proof}

We start the proof, by pointing that, by definition, $M_n, M^{RL}_k$ and $M^{+}_l$ are bipartite graphs (with partition $\{a_i\}$ and $\{b_i\}$). Moreover, $M^{+}_l$ is a complete bipartite graph, and $M^{RL}_k$ is a half graph, where $b_i, a_j$ are connected if and only if $i\le j$ (both observations follow by definition).

Clearly, $K_m, M^{RL}_k, M^{+}_l$ cannot be induced subgraphs of $M_n, n,k\ge 2, m,l\ge 3$, since $M_n$ is disconnected. Also, $M_n, M^{RL}_k, M^{+}_l$ of course cannot be induced subgraphs of $K_m$, since they are bipartite and they do not contain a triangle. For the same reason $K_m$ cannot be an induced subgraph of $M^{RL}_k$. And $M^{+}_l, l\ge 3$ does not contain $M_n, K_m, M^{RL}_k, n,k\ge 2, m\ge 3$ as induced subgraphs, since $M^{+}_l, l\ge 3$ is a complete bipartite ordered graph.

So, it suffices to show that $M^{RL}_k$ does not contain $M_n, M^{+}_l$ as induced ordered subgraphs. 

Note that since $M^{RL}_k$ is a half graph, the only way it could contain an induced ordered subgraph $M_n, n\ge 2$ is if, for edge $b_j a_i, j\le i$, the other edge $b_l a_k, l\le k$ has $k<j$. Otherwise, $b_j$ is connected to $a_k$. But then if $b_l, l\le k$, then $b_l$ is also connected to $a_i$.

In order to prove that $M^{+}_l,l\ge 3$ cannot be an induced ordered subgraph of $M^{RL}_k, k\ge 2$, we first notice that by taking the vertices $b_1, a_2, b_2, a_3$ in $M^{RL}_k, k\ge 3$ we get $M^{+}_2$. Therefore, $M^{+}_l$ must have $l\ge 3$. We also note that $M^{+}_l, l\ge 3$ must contain $P_{2l}$ and that this can be achieved only by choosing alternating $a_i$ and $b_j$ in $M^{RL}_k$. But if $a_i$ from $M^{RL}_k$ is the first or second vertex of the induced subgraph $M^{+}_l$ of $M^{RL}_k$, then this $a_i$ in $M^{+}_l$ is not connected to the fourth or fifth vertex of this $M^{+}_l$, respectively, which is required if we wanted this induced ordered subgraph to be isomorphic to $M^{+}_l, l\ge 3$.

Let us now show that $M^{RL}_{2k}, k\ge 2$ contains an induced ordered subgraph $M^{LR}_k$. This can be seen by selecting the vertices $b_1, a_2, b_3, a_4, b_5, a_6, \ldots, b_{2k-1}, a_{2k}$ of $M^{RL}_{2k}$ and obtaining the induced ordered subgraph $M^{LR}_k$ (note that this does not hold the other way around).

The last thing to show is the chromatic number of these ordered graphs. For $M_n, n\ge 2$ we map all pairs of vertices $b_i, a_{i+1}, 1\le i\le n-1$ to one independent interval (equivalent to running the greedy algorithm) and get a directed path $P_{n+1}$ as a minimum homomorphic image of $M_n$. For $M^{RL}_k, M^{+}_l, k\ge 2, l\ge 3$, this is also straightforward, as both of these ordered graphs contain a directed path $P_{2k}$ and $P_{2l}$, respectively, and the coloring of $K_m$ is $m$.

\end{proof}

We are now ready to prove Theorem~\ref{thm:Gyarfas}.

\begin{proof}[Proof of Theorem \ref{thm:Gyarfas}]

    Assume that Theorem ~\ref{thm:Gyarfas} does not hold for $n,k\ge 2, m,l\ge 3$. Then it does not hold for $\Delta=max\{k,l,m,n\}$. Thus for every $N'=2N+1$, there exists an ordered graph $G$, not containing induced ordered subgraphs $K_\Delta, M_\Delta, M^{RL}_\Delta, M^{+}_\Delta$, with chromatic number $\chi^<(G)\ge N'$ (we choose $N'=2N+1$ in order for $G$ to contain maximum monotone matching of size at least $N$, from Theorem \ref{thm:chiboundedness}). We prove that this is a contradiction using Ramsey Theorem.
    

    By Theorem \ref{thm:Uniq} and Theorem \ref{thm:chiboundedness}, $G$ contains a monotone matching subgraph $M_N$. Take the monotone matching induced subgraph $H_N$ of $G$ on the vertices of $M_N$.

    We see that there can be at most four different edges in between two doubles of $H_N$ and we denote them as follows: 
    
    \begin{itemize}
        \item $LR$ edge, if it connects the left vertex of the first double with the right vertex of the second double.
        \item $RL$ edge, if it connects the right vertex of the first double with the left vertex of the second double.
        \item $LL$ edge, if it connects the left vertex of the first double with the left vertex of the second double.
        \item $RR$ edge, if it connects the right vertex of the first double with the right vertex of the second double.
    \end{itemize}
    
    We therefore have $2^4=16=\mathcal{P}(\{LL,RR,LR,RL\})$ possible ways in which two doubles can be connected. 

    For disjoint doubles $e,e'$ of $H_N$, let $E(e,e')$ be the set of edges joining $e$ and $e'$ (i.e. one of $16$ possibilities).

    Let us take an ordered graph where all doubles are connected by the isomorphic edges (one of $16$). Then these $16$ different ordered graphs correspond to the following five induced subgraphs:

\begin{itemize}

    \item If $\Delta$ doubles are not connected by any edge, we get an induced monotone matching $M_\Delta$.

    \item If $v$ doubles have $E(e,e')=\{LR\}$, we get an ordered graph $M^{LR}_\Delta$.
    
    \item If $\Delta$ doubles have $E(e,e')=\{RL\}$, we get an ordered graph $M^{RL}_\Delta$.

    \item If $\Delta$ doubles have $E(e,e')=\{LR, RL\}$, we get an ordered graph $M^{+}_\Delta$.

    \item In all other twelve cases, $E(e,e')$ contains $LL$ or $RR$ and thus we get a complete ordered graph $K_\Delta$.

\end{itemize}

     Ramsey's Theorem tells us that for any given finite number of colors, $c$, and any given integers $n_1,n_2,\ldots,n_c$, there is a number $R(n_1,n_2,\ldots,n_c)\in \mathbb{N}$, such that if the edges of a complete graph of order $R(n_1,n_2,\ldots,n_c)$ are colored with $c$ different colors, then for some $i\in [c]$, it must contain a complete subgraph of order $n_i=\Delta$ whose edges are all of color $i$ (see, e.g., ~\cite{ramsey1930problem-202}).

Let $N=R(n_1,n_2,\ldots,n_c)=R(n_1,n_2,\ldots,n_{16})$ (since in our case $c=16$), where $n_k=\Delta$ for some color $k\in[16]$. Consider doubles of $H_N$ as vertices (in other words, each double of $H_N$ being represented by a vertex), with $c=16$ types (or colors) of edges $E(e,e')$ (as defined above) in between these doubles (represented by vertices), and apply Ramsey's Theorem. We obtain doubles $e_1,\ldots,e_{\Delta}$ in $H_N$, such that all 
edges between these doubles (represented by vertices) are of the same color $k$. Thus, the doubles $e_1,\ldots,e_{\Delta}$ of $H_N$ in $G$ 
induce one of the ordered induced subgraphs $K_{\Delta}, M_{\Delta}, M^{RL}_{\Delta}, M^{LR}_{\Delta}$ or $ M^{+}_{\Delta}$ of $G$, which is a contradiction with our choice of $\Delta$.

    We showed in Proposition~\ref{prop:non-chibound} that for fixed $n,k\ge 2, m,l\ge 3$, the set of ordered graphs $M_n, K_m, M^{RL}_k, M^{+}_l$ is incomparable and that $M^{LR}_k$ is an induced subgraph of $M^{RL}_{2k}$. 

    Therefore, the set of induced ordered subgraphs $M_n, K_m, M^{RL}_k, M^{+}_l$ of $G$ is indeed minimal and sufficient to limit the size of $\chi^<(G)$.

\end{proof}

This then implies the analogy of the Gy\'arf\'as–Sumner conjecture for $\chi^<$-boundedness of ordered graphs, with replacing the forbidden structures (of a tree and clique for unordered graphs) in the original conjecture with our four graph classes.

In this article, we do not address the size of $f(k,l,m,n)$ from Theorem~\ref{thm:Gyarfas}.

\section{Sparse Incomparability Lemma for Ordered Homomorphisms}
\label{sec:sparse}

In this section, we examine an analogy of the Sparse Incomparability Lemma for ordered graphs. There are many applications of the Sparse Incomparability Lemma in areas of unordered graphs (see~\cite{HellNesetrilGraphHomomorphisms},~\cite{d0f6743d-1cbb-3b59-aa4a-4a1f20ecc7bb},~\cite{NESETRIL1989133},~\cite{NESETRIL2004161}). We prove its analog for ordered graphs and apply it in Section~\ref{sec:density} on order density of ordered homomorphisms.

Let us first define \emph{consecutive vertices} of $G$ as a set of vertices $i,i+1,\ldots,i+j\in V(G),i\in [n-1], j\in [n-i]$. Then if $G\to H$, we define \emph{gluing} the vertices or \emph{glued} vertices in $G$ as a set of (consecutive independent) vertices in $G$ which map to one vertex in $H$. The 'gluing' notion and intuition will help us in the proof of Lemma~\ref{thm:SparesIncomparabilityLemma}.

We also define an \emph{Ordered Matching} as an ordered graph $G$ where each vertex has exactly one edge incident to it.

We shall now prove the following statement, which we call the Sparse Incomparability Lemma.

\begin{lem}[Sparse Incomparability Lemma]
\label{thm:SparesIncomparabilityLemma}
    For any ordered graph $G$ and $k\in \mathbb{N} $, there exists an ordered matching $G'$, such that there exists an ordered homomorphism $f:G'\to G$ and that for any ordered graph $H, |H|\le k$ there is an ordered homomorphism $g:G'\to H$ if and only if there is an ordered homomorphism $h:G\to H$.
\end{lem}

\begin{figure}[ht]
\begin{center}
\includegraphics[scale=0.3]{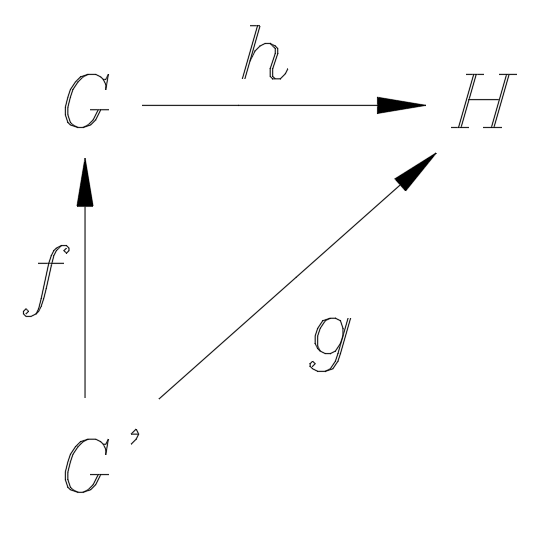}
\end{center}
\caption{Sparse Incomparability Lemma Mapping}
\label{fig:SIL}
\end{figure}

\begin{proof}

We prove this statement by constructing the ordered graph $G'$ and showing that it satisfies the required properties.

Fix any ordered graph $G$ and $k\in \mathbb{N}$. We assume $G$ does not contain any isolated vertices, as in case it does, we can simply map them to the same homomorphic image vertex as nearby vertex in $G$ (in the sense of ordering of $G$) adjacent to at least one other vertex of $G$ (therefore they will not make a difference in our statement). 

Let us again denote $n=|G|$ and the vertices in $G$ as follows.

\[
v_1,v_2,\ldots,v_n.
\]

We will now propose a construction of an ordered matching $G'$ so that the statement is satisfied.

Let $G'$ have $m=nk(n-1)$ vertices that are separated into $n$ sets of vertices $X_i, i\in[n]$, each of $X_i$ containing $k(n-1)$ vertices, in the following order.

\[
G'=(X_1,X_2,\ldots,X_n).
\]

We then further separate each $X_i, i\in[n]$ into $k$ sets of vertices $Y^j_i, j\in[k], i\in[n]$. The sets $Y^j_i, j\in[k], i\in[n]$ for each fixed $i\in[n]$ are then ordered as follows.

\[
X_i=(Y^1_i,Y^2_i,\ldots,Y^k_i).
\]

In each of these sets of vertices $Y^j_i$ we will have $(n-1)$ vertices. We denote and order these vertices as follows.

\[
Y^j_i=(w^{ij}_1,w^{ij}_2,\ldots,w^{ij}_{i-1},w^{ij}_{i+1},w^{ij}_{i+2},\ldots,w^{ij}_n).
\]

Now, for every edge $v_{i_1}v_{i_2}\in E(G), i_1,i_2 \in [n]$, create edges $w^{i_1j}_{i_2}w^{i_2j}_{i_1}, j\in[k]$ in $G'$ (therefore creating $k$ edges in $G'$ per each edge in $G$). This completes the definition of $G'$.

Notice that if there is an edge $\{v_{i_1}v_{i_2}\}\in E(G), i_1,i_2 \in [n]$, we add edges only in between sets $Y^j_{i_1}$ and $Y^j_{i_2}, j\in [k], i_1,i_2 \in [n]$ with the same $j$ in $G'$.

We see that $G'$ is a matching; for contradiction, assume that $w^{i_1j}_{i_2}$ is connected to more than one vertex. As defined, $w^{i_1j}_{i_2}$ is connected only to the vertices where $j$ is the same. But by definition the only vertex with the same $j$ that can be connected to $w^{i_1j}_{i_2}$ is the vertex $w^{i_2j}_{i_1}$, a contradiction.

The mapping $f$ defined as $f(x)=v_i, x\in X_i, i=1,2,\ldots,n$ is an ordered homomorphism $f:G'\to G$. In the sense of gluing, we can map (or glue) all the vertices in each $X_i, i\in [n]$ in $G'$ and map them to $v_i$ in $G$, respectively, and we obtain $f:G'\to G$. In the following, this will be a method that we will often use to prove the existence of an ordered homomorphism.

Therefore, if there is an ordered homomorphism $h:G\to H$, then there is $g:G'\to H$ (by transitivity). Note that $G\to G'$ if and only if $G$ is an ordered matching.

Before we try to prove the other direction (if there is $g:G'\to H$, then there is $h:G\to H$), we again note that any gluing operation on ordered matching $G'$ will produce an ordered graph $G''$ for which there exists an ordered homomorphism from $G'$ to $ G''$. This, of course, holds for any ordered graph, as we are simply mapping sets of independent consecutive vertices to one vertex, and this preserves order and edges of the ordered graph. The same therefore holds for $G$ and we shall denote $G^*$ an ordered graph resulting from $G$ by gluing vertices in $G$, and again we get an ordered homomorphism from $G$ to $G^*$. 

We note that by exhausting all the possible options for the gluing of $G'$, the resulting set of ordered graphs will therefore be all surjective homomorphic images of $G'$ (and the same for $G$). $G''$ is therefore any surjective homomorphic image of $G'$, and $G^*$ is any surjective homomorphic image of $G$. Then if $G'\to H$, then $H$ must contain some $G''$ to which $G'$ can map surjectively in $H$. For this $G''$ there naturally exists ordered homomorphism $G''\to H$.

This will be important because if we show that every $G''$ on less than $k+1$ vertices contains an ordered subgraph $G^*$ (meant again as any of the ordered homomorphic images of $G$), and if $G'\to H$, then there must be an ordered homomorphism from $G$ to $H$ (since $G\to G^*\to G''\to H$). Therefore, this will then prove that if $G'\to H$, then $G\to H$. We shall denote $n^*=|G^*|$.

We will now define a \emph{$v_i$-equivalent} vertex as a vertex $w_i$ in $G''$ resulting from gluing some of the vertices in $X_i$ in $G'$ and creating a vertex $w_i$ in $X_i$, such that if $v_i$ is connected to $v_j \in G, j\in [n], j\neq i$, then $w_i$ is connected to at least one vertex in $X_j\in G'', j\in [n]$ (note that $X_j\in G'', j\in [n]$ might be also glued, entirely or partially). 

We notice that this $v_i$-equivalent vertex can be created by gluing any $n-1$ consecutive vertices in $X_i$. Note that there can be maximum $k$ $v_i$-equivalents per each $X_i$. Note that this maximum number is due to the assumption that there are no isolated vertices. Although again, if there was an isolated vertex $v_i\in G$, it would be rather easier (as it becomes apparent in the rest of the proof) since in such case every vertex in $X_i$ would be a $v_i$-equivalent.

Let $v_i$-equivalent be created by gluing vertices in $X_i$ in $G', w_{i_1}$ and $w_{i_2}, i_1, i_2 \in [|X_i|]$ and all the vertices in between. We shall denote the independent interval $\langle i_1i_2\rangle_i, i_1, i_2 \in [|X_i|], i\in[n]$ as a set of integers corresponding to the order of the vertices in $X_i$ that were mapped to one vertex in the ordered homomorphic image to create a $v_i$-equivalent (of course, for each $i\in [n]$, the $\langle i_1i_2\rangle_i$ can differ). 

We then notice that if for each $i\in [n]$ and its corresponding $v_i$-equivalent the intersection of all $\langle i_1i_2\rangle_i, i=1,\ldots,n$ is larger than $n-1$, by selecting all these $v_i$-equivalents for each $i\in [n]$ we will get an ordered subgraph in $G''$ isomorphic to $G$. Also, if for fixed $j\in k$ all the $v_i$-equivalents have an intersection including all the $Y^j_i$ vertices for each $i\in n$, then by taking these $v_i$-equivalents, we get an ordered subgraph in $G''$ isomorphic to $G$. And therefore in such case if $G'\to G''\to H$ then $G\to G''\to H$.

Let us now look closer at the homomorphic image $G^*$ of $G$. For $G^*$, the $v^*_i$-equivalent in $G''$, $ i\in [n^*]$, will be defined exactly the same (as it is irrespective of an input graph $G$ or $G^*$). The only difference with respect to how many vertices will be enough to create a $v^*_i$-equivalent in $G''$ will be that if $v^*_i$ is being a result of gluing $p\in [n]$ independent vertices $v_{i_1},v_{i_1+1},\ldots,v_{i_1+p}$ in $G$, then we will need at least $n-1$ last vertices in $X_{i_1}\in G'$, $n-1$ first vertices in $X_{i_1+p}\in G'$ (last and first with respect to ordering in $X_{i_1}$ and $X_{i_1+p}$, resp.), and all the vertices in between in $G'$ being glued, to be sure that we create $v^*_i$-equivalent in $G''$. The observation with respect to the minimal intersection of all independent intervals $\langle i^*_1i^*_2\rangle_i$ being at least $n-1$ to get an ordered graph $G^*$ in $G''$ then holds also in this case. We will also denote $v^*_{j,k}$-equivalent a vertex in $G''$ that is $v^*_i$-equivalent, where $v^*_i$ is a vertex in $G^*$ that is a result of gluing vertices $v_j$ and $v_k$ in $G$.

We will then proceed as follows:

\begin{enumerate}
    \item We will first show that by consecutive gluing, until we get an ordered graph $G''$ on $k$ or less vertices (since $G'$ must map to $H$ on maximum $k$ vertices), irrespective of the order of gluing, we will always create at least one $v^*_i$-equivalent in $G''$ for each $i=1,\ldots,n^*$.
    \item  Then we will show that there will always be an intersection of $\langle i^*_1i^*_2\rangle_i$ for each $i\in [n^*]$ that is larger than $n-1$ vertices, so every $G''$ on $k$ or less vertices will always contain some ordered subgraph $G^{*}$.
\end{enumerate}

In order to prove the first part, we will observe that when separating $X_i$ into different sets of independent consecutive vertices that we glue, the maximum number of sets of vertices into which we break $X_i$ can be $k$, because $k$ is the largest number of vertices of $H$. Therefore, since each $X_i$ has $(n-1)k$ vertices, we always get at least one $v_i$-equivalent per each $X_i, i\in [n]$.

Let us now denote by $v^*_{i,j}, i,j\in [n]$ a vertex in $G^{*}$ that is the result of the gluing of the vertices $v_i,v_{i+1},\ldots,v_j$ in $G$.

Assume that we glue the vertex in $X_i$ to the vertex in other $X_k; i,k\in [n]$ and of course all the vertices between them. This means that $v_{i+1}$ and $v_{k-1}$ and any other consecutive vertices in $G$ between them are independent. If we glue these vertices in $G$ and get an ordered graph $G^*$, there will be an ordered homomorphism from $G$ to $G^*$. We notice that for $p\in [n-i]$ and such vertices $v_i$ and $v_{i+p}$ in $G$, if we glue vertices from $X_i$ and $X_{i+p}$ in $G'$, we will surely create an $v^*_{i+1,i+p-1}$-equivalent. Now, if as a part of this gluing we glue at least $n-1$ vertices from $X_i$, then we create $v^*_{i,i+p-1}$-equivalent. If we glue less than $n-1$ vertices, then the same principle as described above applies (using $X_i$ having $(n-1)k$ vertices and being separated to maximum $k$ parts) and we will either create $v_i$-equivalent or $v^*_{a,i}$-equivalent for some $a\in [i-1]$. The same principle applies to $X_{i+p}$.

We have therefore shown that, regardless of the order of gluing, we will always create $v^*_i$-equivalent in any $G''$ on less than $k+1$ vertices, for each $v^*_i, i\in[n^*]$ in $G^*$. This proves the first part outlined above.

We will now show the second part, i.e., that there will always be an intersection of $\langle i^*_1i^*_2\rangle_i$ that is of size at least $n-1$ for each $i\in [n^*]$, so we will always get an ordered graph $G^*$ in $G''$, when $G''$ is on less than $k+1$ vertices.

For $l\in[k], i\in[n]$, let us define an \emph{$Y^l_i$-gap} as a pair of two consecutive vertices in $Y^l_i$ that are not glued. We observe that if $Y^l_i$ is not a part of an $v^*_{i}$-equivalent, $i\in n$, then $Y^l_i$ contains the $Y^l_i$-gap.

Now, let us look at the entire ordered graph $G''$. For each $X_i, i\in [n]$, $G''$ has at most $k$ disjoint $Y^j_i, i\in [n], j \in [k]$. Let us now define a \emph{$j$-intersection} of $G''$ as an intersection of all glued independent intervals $\langle i_1i_2 \rangle_i$ of $v_i$-equivalents that contains $Y^j_i$ for all $i\in [n]$. We see that if $j$-intersection does not exist, there must be a gap in at least one $Y^j_i, i\in [n]$. If there does not exist an $j$-intersection for any $j\in [k]$, there must be at least one gap in each of these intersections. This would mean there would be at least $k$ gaps. This is a contradiction, though, since $k$ gaps in non-intersecting $Y^j_i$ means that the resulting $G''$ would have at least $k+1$ vertices. Therefore, we would need to glue at least one of these gaps, which would give us $j$-intersection.

We showed that for every ordered homomorphism $G'\to H$, for any surjective homomorphic image $G''$ of $G'$, with $G''$ on $k$ or fewer vertices, there will be at least one $j\in [k]$, such that for every $X_i, i\in [n]$ it will hold that $Y^j_i$ is glued. Let us now look at a particular surjective homomorphic image $G''$ of $G'$ such that $G'\to G''\to H$ ($G''$ can be isomorphic to $H$) and fix this $G''$. We know that this $G''$ will contain some $j$-intersection, so let us fix this $j$ as well.

Let us denote $n''=|G''|\le k$ (note that this is because $G'\to H$ does not need to be onto) and $((w^{i_1i_2}_{i_3},w^{i_4i_5}_{i_6}))_{i''}, i_1,i_4\in [n];i_2,i_5\in[k];i_3,i_6\in [n-1], i''\in [n''];w^{i_1i_2}_{i_3}<w^{i_4i_5}_{i_6}$ a set of consecutive vertices from $w^{i_1i_2}_{i_3}$ to $w^{i_4i_5}_{i_6}$ (consecutive with respect to ordering of $G'$) in $G'$ that map to one vertex $v''_{i''}$ in $G''$. As $G'\to G''$ is a surjection, every vertex $v''_{i''}$ of $G''$ is well-defined by $((w^{i_1i_2}_{i_3},w^{i_4i_5}_{i_6}))_{i''}$. We showed that for our fixed $j$, every $Y^j_i, i=1,\ldots,n$ of $G'$ must be entirely contained in one of $((w^{i_1i_2}_{i_3},w^{i_4i_5}_{i_6}))_{i''}$.

If $(w^{i_1i_2}_{i_3},w^{i_4i_5}_{i_6}))_{i''}$ contains \emph{one} $Y^j_i, i\in [n]$, then we can map the vertex $v_i$ of $G$ to $v''_{i''}$ of $G''$ corresponding to $v_i$-equivalent.

If $(w^{i_1i_2}_{i_3},w^{i_4i_5}_{i_6}))_{i''}$ contains \emph{more} than one $Y^j_i, i=i_7,i_7+1,\ldots,i_8; i_7,i_8\in [n]$, then the vertices $v_{i_7}, v_{i_7+1}, \ldots, v_{i_8}$ of $G$ must be independent and we can map them to $v''_{i''}$ of $G''$ (corresponding to $v^*_{i_7,i_8}$-equivalent). If $(w^{i_1i_2}_{i_3},w^{i_4i_5}_{i_6}))_{i''}$ does not contain any $Y^j_i, i\in [n]$, then we will not map any vertex of $G$ to the $v''_{i''}$ vertex of $G''$ (notice that $G\to G''$ does not need to be onto).

We see that this way we will map all the vertices of $G$ to $G''$, since each $Y^j_i, i\in [n]$ is contained in some $((w^{i_1i_2}_{i_3},w^{i_4i_5}_{i_6}))_{i''}$. Ordering is, of course, preserved, so we will need to show that the edges between the vertices of $G^*$ (which can be isomorphic to $G$) are preserved in $G''$ as well.

Let us assume that $((w^{i_1i_2}_{i_3},w^{i_4i_5}_{i_6}))_{i''}$ contains more than one $Y^j_i, i=i_7, i_7+1, \ldots, i_8; i, i_7, i_8 \in [n]$, or more precisely $i_8-i_7$ number of $Y^j_i$ sets. $((w^{i_1i_2}_{i_3},w^{i_4i_5}_{i_6}))_{i''}$ containing only one $Y^j_i, i\in [n]$ is, of course, only a special case.

We have four options where $w^{i_1i_2}_{i_3}$ and $w^{i_4i_5}_{i_6}$ from $((w^{i_1i_2}_{i_3},w^{i_4i_5}_{i_6}))_{i''}$ could be located with respect to $i_7$ and $i_8$, respectively.

\begin{enumerate}
    \item $i_1=i_7, i_4=i_8$
    \item $i_1=i_7-1, i_4=i_8$
    \item $i_1=i_7, i_4=i_8+1$
    \item $i_1=i_7-1, i_4=i_8+1$
\end{enumerate}

If $i_1=i_7, i_4=i_8$, then $v''_{i''}$ of $G''$ contains all the edges of $v^*_{i_7,i_8}$ of $G^*$. 

In all other cases, $v''_{i''}$ of $G''$ will of course also contain all the edges of $v^*_{i_7,i_8}$ and it can potentially contain additional edges from $X_i, i=i_7-1,i_8+1$ in $G'$.

For example, in the case $i_1=i_7-1, i_4=i_8$, $v''_{i''}$ of $G''$ can also contain edges of $v_{i_7-1}$ of $G$ (since the only condition is that $j<i_2$, because $((w^{i_1i_2}_{i_3},w^{i_4i_5}_{i_6}))_{i''}$ does not contain $Y^j_i, i=i_7-1$). As $v''_{i''}$ will not contain $Y^j_i, i=i_7-1$, there will be another vertex of $G''$ that will contain $Y^j_i, i=i_7-1$ (as all $Y^j_i, i\in [n]$ are included in the $j$-intersection) and we can repeat the same argument for that vertex. We notice that because $v''_{i''}$ does not contain $Y^j_i, i=i_7-1$, it can contain edges of $v_{i_7-1}$ of $G$ even if $v_{i_7-1}$ and $v_{i_8}$ are connected in $G$ (as the only condition is that $j\le i_5$, because $((w^{i_1i_2}_{i_3},w^{i_4i_5}_{i_6}))_{i''}$ contains $Y^j_i, i=i_8$ and $i_4=i_8$). But this is fine, since these are only additional edges in $v^*_{i_7,i_8}$-equivalent in $G''$, so $v^*_{i_7,i_8}$ of $G^*$ can map to it and the edges from $v^*_{i_7,i_8}$ in $G^*$ will be preserved. As mentioned, because $v''_{i''}$ of $G''$ will not contain $Y^j_i, i=i_7-1$, there will be another vertex of $G''$ that will contain $Y^j_i, i=i_7-1$ and we can repeat the same argument for it. In the remaining two cases above, when $i_1=i_7, i_4=i_8+1$ and when $i_1=i_7-1, i_4=i_8+1$, the reasoning is again the same.

This shows that if there is $g:G'\to H$, the surjective homomorphic image $G''$ of $G'$ in $g$ must always contain $G^*$. Therefore, since $G'\to G''\to H$ and $G\to G^*\to G''$, we get $G\to G^*\to G''\to H$.

\end{proof}

\section{Order Density for Ordered Homomorphisms}
\label{sec:density}

As for the digraphs in~\cite{HellNesetrilGraphHomomorphisms}, let $\mathcal{G}$ denote the set of all ordered graphs, $G, H\in \mathcal{G}$, and let us write $G\le H$ for $G\to H$, and write $G<H$ for $G\to H$ and $H\not\to G$. We also see that ordered homomorphisms are transitive and reflexive relations on $\mathcal{G}$. However, $\le$ is in general not antisymmetric; therefore, $\le$ defines the quasi-order on $\mathcal{G}$.

Again, as in ~\cite{HellNesetrilGraphHomomorphisms}, we will transform quasiorder into partial order on $\mathcal{G}$, by choosing the ordered cores to be representatives for each equivalence class. We will denote by $\mathcal{C}$ the set of all non-isomorphic ordered cores, the set $\mathcal{C}$ is thus a partial order under $\le$. 

We will then say that a partial order is \emph{dense} if for any $a<b$, there exists an element $c$ such that $a<c<b$. For graphs, we have either $K_2\le X$ or $X\le K_1$ for any ordered graph $X$, depending on whether $X$ has edges or not, resp. Thus, we see that the partial order $\mathcal{C}$ under $\le$ is not dense. We will therefore say that the ordered pair $(G,H)$ of ordered graphs $G, H\in \mathcal{C}, G<H$ forms a \emph{gap} in $\mathcal{C}$, if there is no ordered graph $F\in \mathcal{C}$, such that $G<F<H$. We may define a gap the same way for quasiorder $\mathcal{G}$ under $\le$. We aim to characterize the gaps.

Compared to graphs, we will show that the partial order $\mathcal{C}$ under $\le$ has many more gaps, but it is otherwise dense. But let us first start with an easy observation. We will notice that some of the following statements will hold even considering the ordered graphs of $\mathcal{G}$. Of course, if the pair of ordered graphs is a gap in quasiorder $\mathcal{G}$ under $\le$, then it is a gap in partial order $\mathcal{C}$ under $\le$.

Let a \emph{Connected Ordered Graph} $G$ and an \emph{Ordered Component} of $G$ be defined in the same way as connected graphs and their components for unordered graphs, respectively. Then the following statement holds.

\begin{thm}
\label{thm:order}
    Let $k\in \mathbb{N}$, $G_1$ be an ordered graph on at most $k$ vertices and $G_2$ be an ordered core, where every component of $G_2$ has more than two vertices, and $G_1<G_2$. Then there exists an ordered graph $F$ such that $G_1<F<G_2$.
\end{thm}

\begin{proof}

We will prove this statement by constructing such an ordered graph $F$ and showing that it satisfies all the required properties.

As we have shown in Theorem~\ref{thm:SparesIncomparabilityLemma}, when constructing an ordered matching $G'$ out of $G_2$, for any ordered graph $H$ on at most $k$ vertices there is an ordered homomorphism from $G'$ to $H$ if and only if there is an ordered homomorphism from $G_2$ to $H$. Let therefore $G_1$ be isomorphic to $H$. We see that because there is no ordered homomorphism from $G_2$ to $G_1$, there is no ordered homomorphism from $G'$ to $G_1$ due to the above equivalence.

Same as in Theorem~\ref{thm:SparesIncomparabilityLemma}, we see that there exists an ordered homomorphism from $G'$ to $G_2$. Because $G_2$ is the ordered core and has all components on at least three vertices, there is no ordered homomorphism from $G_2$ to $G'$, since $G'$ is a matching.

Before constructing an ordered graph $F$ from our statement, let $v_1,v_2,\ldots,v_n$ be vertices in $G_2$ in their order. We then define the sets of vertices $W_i, i\in [n]$, where based on $f:G_1\to G_2$, each $W_i$ contains the set of vertices of $G_1$ that map to $v_i$ in $G_2$. Notice that some of $W_i$ can be empty.

Let us also take the sets $X_i, i\in [n]$ of $G'$ as defined in the proof of Theorem~\ref{thm:SparesIncomparabilityLemma}.

We then construct an ordered graph $F$ as a disjoint union of $W_i$ and $X_i, i\in [n]$ as follows:

$$
F = (W_1, X_1, W_2, X_2, \ldots, W_n, X_n).
$$

We will then preserve all edges in $F$ between vertices of $G_1$ in $W_i, i\in [n]$ and between vertices of $G'$ in $X_i, i\in [n]$, so that $\bigcup_i W_i = G_1$ and $\bigcup_i X_i = G'$. This completes the definition of $F$.

Notice that there is no edge between the vertices of $\bigcup_i W_i$ and $\bigcup_i X_i$, therefore, $F$ is not connected.

But then to show that there is no ordered homomorphism $h^{'}:G_2\to F$, we first observe that because $G_2$ is a core and it has only components of order larger than two, and because there is no edge between $\bigcup_i W_i$ and $\bigcup_i X_i$, and since $\bigcup_i X_i$ is an ordered matching, then if there is an ordered homomorphism $G_2\to F$, then $G_2$ could only map entirely to $\bigcup_i W_i$. But because $\bigcup_i W_i=G_1$ and there is no ordered homomorphism from $G_2$ to $G_1$, $G_2$ cannot map to $F$. 

Notice that if $G_2$ contained an isolated edge, then this edge could possibly map to $\bigcup_i X_i$ and the rest of $G_2$ could map to $\bigcup_i W_i$. This is why we need an assumption that order of all components in $G_2$ is larger than two.

In order to show that there exists an ordered homomorphism $h:F\to G_2$, we first observe that for any fixed $i\in [n]$, we can glue $W_i$ and $X_i$. We can glue vertices within $W_i$, as if these were not independent, they could not map to the same vertex in $G_2$. We can also glue the vertices within $X_i$ as we showed in the proof of Theorem~\ref{thm:SparesIncomparabilityLemma}. Ultimately, we can glue the vertices in $W_i$ and $X_i$ altogether because there is no edge between $W_i$ and $X_i$. Therefore, let us construct an ordered graph $F^{'}$ by gluing $W_i$ and $X_i$ for every $i\in [n]$.

We see that there is an ordered homomorphism from $F$ to $F^{'}$, because as we reasoned before, for the ordered graph $F'$, resulting from gluing the vertices in $F$, there exists an ordered homomorphism $F\to F^{'}$. Therefore, we will show that there is an ordered homomorphism $h_1: F^{'}\to G_2$ in order to prove the existence of $h:F\to G_2$ (by transitivity).

We see that the number of vertices of $F^{'}$ and $G_2$ is the same, so $h_1$ will map the vertices of $w_i$ from $F^{'}$ to $v_i$ of $G_2, i\in [n]$. The order of vertices is, therefore, preserved by $h_1$.

We can see that if there is an edge $\{w_i,w_j\}$ in $F^{'}$, this edge originates from the presence of an edge between $W_i$ and $W_j$ or an edge in between $X_i$ and $X_j, i,j \in [n]$. But if there is an edge between $W_i$ and $W_j, i,j \in [n]$, then this edge will be present in $G_2$, because of the existence of the ordered homomorphism $f$ from $\bigcup_i W_i=G_1$ to $G_2$, because $h_1$ maps the vertices of $G_1$ to the same vertices of $G_2$ as $f$. The same holds if there is an edge in between $X_i$ and $X_j, i,j \in [n]$, since then this edge will be present in $G_2$, because of the existence of the ordered homomorphism $h_2$ from $\bigcup_i X_i=G'$ to $G_2$, because $h_1$ maps the vertices of $G'$ to the same vertices of $G_2$ as $h_2$.

We also see that there is an ordered homomorphism $g:G_1\to F$, since $F$ contains $\bigcup_i W_i=G_1$ and there is no ordered homomorphism $g':F\to G_1$, because there is no ordered homomorphism from $\bigcup_i X_i=G'$ to $G_1$. This completes the proof.
\end{proof}

We, of course, notice that Theorem~\ref{thm:order} does not address ordered matchings or ordered graphs containing isolated edges, since the approach chosen in this Theorem will not work. 

Choosing, for example, $G_1=P_3$ and 
$$
G_2=(V=\{1,2,3,4,5\},E=\{\{1,2\},\{2,3\},\{4,5\}\},\le_{G_2}=(1,2,3,4,5)),
$$
we see that when constructing $F$ as in the proof of Theorem~\ref{thm:order}, the component $P_3$ in $G_2$ can map to $\bigcup_i W_i=G_1$ in $F$ and the component $P_2$ in $G_2$ can map to $\bigcup_i X_i=G'$ in $F$ ($W_i$ and $X_i$ defined as in Theorem~\ref{thm:order}). Therefore, there will be an ordered homomorphism from $G_2$ to $F$.

The following statement then expands the class of ordered graphs for which the order defined by ordered homomorphisms is dense.

\begin{prop}
\label{prop:besticando}
    Let $G_1, G_2\in \mathcal{C}$, where $G_1$ is not isomorphic to $K_1$, $G_2$ contains at least one component that cannot be mapped to any of the components in $G_1$, and $G_1\le G_2$. Then there exists an ordered graph $F\in \mathcal{C}$ such that $G_1<F<G_2$.
\end{prop}

\begin{proof}
    We will again prove the statement by constructing an ordered graph $F$ that satisfies all the required properties.

    We will follow exactly the same construction of $F\in \mathcal{C}$, out of ordered cores $G_1, G_2\in \mathcal{C}$, as we did in the proof of Theorem ~\ref{thm:order}, and we will adopt the same notations. Observe that since $G_1$ and $G_2$ are ordered cores, they do not contain isolated vertices.

    Since $G_1$ contains at least one edge, $G_2$ needs to contain at least one component of order larger than two, therefore $G_2$ cannot be an ordered matching, and, of course, there is no ordered homomorphism $G_2\to G_1$.

    We also immediately see that $G_1\le F$, since $F$ again contains $G_1$.

    It is also clear that using the reasoning in the proof of Theorem~\ref{thm:SparesIncomparabilityLemma}, there is no ordered homomorphism from $F$ to $G_1$, since $F$ contains $G'$, from which there is no homomorphism to $G_1$. Therefore, $G_1< F$.

    We can also see that using the same arguments as in the proof of Theorem~\ref{thm:order}, there is an ordered homomorphism from $F$ to $G_2$, since by gluing each $W_i$ and $X_i, i=1,\ldots,|G_2|$ we again get an ordered graph $F'$, such that $F<F'<G_2$.

    The only difference compared to Theorem~\ref{thm:order} is to prove that there is no ordered homomorphism from $G_2$ to $F$. But because there exists a component $G_2'\subseteq G_2$, such that $G_2'$ cannot be mapped to any of the components in $\bigcup_i W_i=G_1$ in $F$ and this component is of order greater than two (by the reasoning above), $G_2'\subseteq G_2$ cannot map to $G_1\subset F$ nor to $G'\subset F$, respectively. Therefore, there is no ordered homomorphism $G_2\to F$. This completes the proof.
\end{proof}

We see that the ordered graphs forming a dense order in Proposition ~\ref{prop:besticando} can contain ordered cores with components of order two. However, it is not a superset of ordered graphs defined by Theorem~\ref{thm:order}, since in the case $G_1$ in Proposition ~\ref{prop:besticando} contains a component of large order, e.g. $K_m$ with large $m$, $G_2$ needs to contain a component that cannot map to $K_m$. This is not a condition for ordered graphs $G_2$ in Theorem~\ref{thm:order}, where it is sufficient that all components of $G_2$ are of order larger than two.

Based on the previous findings (or rather the classes of ordered graphs for which we were not able to prove the dense order), we will continue by having a closer look at the ordered monotone matchings. Using Theorem~\ref{thm:Uniq}, we get the following corollary.

\begin{cor}
\label{cor:M1M2order}
    Let $M_1$ and $M_2$ be ordered ordered monotone matchings. Then $(M_1,M_2)$ is a gap in quasiorder $\mathcal{G}$ under $\le$.
\end{cor}

\begin{proof}
    Assume for contradiction, that there exists $F\in \mathcal{G}$ such that $M_1<F<M_2$. Theorem~\ref{thm:Uniq} tells us that if there is no $h^{'}:M_2\to F$, then there is $g'':F\to K_2$. But, since $K_2=M_1$, there exists $g':F\to M_1$, a contradiction.
\end{proof}

In fact, the whole class of (consecutive) monotone matchings forms a gap in the quasiorder $\mathcal{G}$ under $\le$.

\begin{prop}
\label{prop:MonMatchOrder}
    Let $M_i$ and $M_{i+1}, i\in \mathbb{N}$ be ordered monotone matchings. Then for each $i\in \NN$, $(M_i, M_{i+1}), i\in \mathbb{N}$ forms a gap in quasiorder $\mathcal{G}$ under $\le$.
\end{prop}

\begin{proof}
    Let us again assume for contradiction, that there exists an ordered graph $F\in \mathcal{G}$, such that $M_i<F<M_{i+1}$. Using again Theorem~\ref{thm:Uniq}, we first observe that because there is no $h^{'}:M_{i+1}\to F$, then there is $h^{''}:F\to K_{i+1}$. Using the same Theorem~\ref{thm:Uniq} again, we also observe that because there is $g:M_i\to F$, then there is no $g'':F\to K_i$. But because $h:F\to M_{i+1}$, and $h^{''}:F\to K_{i+1}$, and because there is no $g'':F\to K_i$, then the ordered graph $F$ must consist of precisely $i$ (connected or disconnected) subgraphs that map to $i$ doubles of $M_{i+1}$. 
    
    More precisely, $F$ maps surjectively to the maximum $i$ components of $M_{i+1}$ because $h:F\to M_{i+1}$ and $h^{''}:F\to K_{i+1}$. In other words, if $F$ consisted of $i+1$ subgraphs mapped surjectively to the $i+1$ components of $M_{i+1}$, then it cannot map to $K_{i+1}$. Also, $F$ maps to no less than $i$ components of $M_{i+1}$ because there is no $g'':F\to K_i$. I.e., if $F$ mapped to less than $i$ components of $M_{i+1}$ then there would exist $g'':F\to K_i$. 
    
    But then $i$ components of $M_{i+1}$ correspond to an ordered graph isomorphic to $M_{i}$. Therefore there must be an ordered homomorphism $g':F\to M_i$, a contradiction.
\end{proof}

We now show that the gaps are formed by even more generic pairs of ordered graphs. Again, we will denote disjoin union of two ordered graphs by $+$.

\begin{thm}
\label{thm:gapGe}
Let $G_1,G_2\in$ \CC$, G_2=G_1 \sqcup e$, where $e$ is an isolated edge, and $G_1<G_2$. Then
$(G_1, G_2)$ is a gap in partial order \CC~under $\le$.
\end{thm}

\begin{proof}

For contradiction, let us assume that there exists $G \in $ \CC, such that $G_1 < G < G_2$. 

Because $G_1, G_2$ are ordered cores and $G_1\to G_2 = G_1+ e$, where $e$ is an isolated edge, $G_1$ must map to an ordered graph $G_2[G_1]$ in $G_2$, where $G_2[G_1]$ is isomorphic to $G_1$. Let us assume otherwise. Then an ordered subgraph $G^e_1$ of $G_1$ maps to $e\in G_2$ and $G_1\setminus G^e_1$ maps to a subgraph of $G_2[G_1]$. But 
since $G_1$ is an ordered core, $G^e_1$ must be an isolated edge $e_1\in G_1$ (otherwise $G^e_1\subseteq G_1$ could map to its ordered subgraph - an edge, and $G_1$ would not be an ordered core). Let us denote a subgraph isomorphic to $G_1\setminus e_1$ in $G_2$ as $G_2[G_1\setminus e_1]$ (we see that this subgraph must exist in $G_2$ since it exists in $G_1\subset G_2$). Therefore, an edge $e$ in $G_2$, where the edge $e_1$ of $G_1$ is mapped, and the subgraph $G_2[G_1\setminus e_1]$ of $G_2$, where the $G_1$ without an edge $e_1$ is mapped, must be isomorphic to $G_1$ (here we again use the same argument of $G_1$ being an ordered core and therefore $G_1\setminus e_1$ not being able to map to a proper ordered subgraph of $G_2[G_1\setminus e_1]$).

More precisely, $G_1\setminus e_1$, of course, cannot surjectively map to an ordered subgraph of $G_2$ isomorphic to $G_1$, since $G_1\setminus e_1$ has fewer edges and vertices than $G_1$ (since $e_1$ is an isolated edge). Therefore, $G_1\setminus e_1$ needs to map to an ordered subgraph of $G_2[G_1\setminus e_1]$. However, since $G_1$ is a core, it cannot be a proper ordered subgraph of $G_2[G_1\setminus e_1]$. Therefore, $G_1\setminus e_1$ must map to $G_2[G_1\setminus e_1]$ bijectively, and $G_1$ must map to an ordered subgraph $G_2[G_1]$ bijectively as well, a contradiction. 

Therefore, $G_1$ must indeed map to an ordered subgraph $G_2[G_1]$ in $G_2$ that is isomorphic to $G_1$.

Let us now take an ordered subgraph $F$ of $G$, which is a (surjective) homomorphic image of $G_1$ in $G$. Then $F$ must also map to an ordered subgraph of $G_2$ that is isomorphic to $G_1$. Let us assume otherwise. $F$ cannot map to a proper ordered subgraph of $G_1$ in $G_2$, since $G_1$ is a core. Therefore $F$ must (surjectively) map to a proper ordered subgraph of $G_2[G_1]$ and the edge $e$. But then also $G_1$ can map to a proper subgraph of $G_2[G_1]$ and the edge $e$ (by transitivity). But as we have shown above above, $G_1$ must map to an ordered subgraph of $G_2$ isomorphic to $G_1$. Therefore, $F$ must also map to an ordered subgraph of $G_2$ isomorphic to $G_1$.

Therefore, it holds that $G_1\to F\to G_2[G_1]$, $G_1\to G_2[G_1]$ and that $G_1, G_2[G_1]$ are isomorphic ordered graphs. But since $F$ is a homomorphic image of $G_1$ (so $G_1\to F$ is onto) and $G_1, G_2[G_1]$ are (isomorphic) cores, we get $G_1\to F$ and $F\to G_2[G_1]$. Therefore, $F$ must be isomorphic to $G_1$ (since the ordered core is only homomorphically equivalent to itself) and we arrive at a contradiction.

We have therefore shown that an ordered subgraph $F$ of $G$, which is a (surjective) homomorphic image of $G_1$ in $G$, must also map to an ordered subgraph of $G_2$ that is isomorphic to $G_1$.

Since $G \not \to  G_1$, we get that $G$ contains $G[G_1]=F$ (from the previous argument) and at least one additional edge - let us denote it by an edge set $E_G$. None of these additional edges $E_G$ can map to a subgraph $G[G_1]$ 
, since $G$ is an ordered core. Note that $|V (G)|$ and $|V (G[G_1])|$ do not need to be the same. We therefore have $G[G_1] \cup E_G = G \to G_2 = G_1 + e$, where, as shown before, $G[G_1]$ will map to $G_2[G_1]$ by bijection. But then $E_G$ cannot have any edges connected to the vertices of $G[G_1]$, because otherwise $G[G_1]$ and these edges connected to it could map to $G_2[G_1]$, and this is again not possible because $G$ is a core. Therefore, $E_G$ is not connected to $G[G_1]$ and it must entirely map to $e$. But then $E_G$ forms an ordered subgraph of $G$ (not necessarily connected) that will map to $e$. But because $G$ is a core, $E_G$ must be isomorphic to $e$, which will make $G$ isomorphic to $G_2$ (since $G[G_1]$ is isomorphic to $G_2[G_1]$). This is a contradiction, since this would mean that $G_2 \to G$. We therefore get that there does not exist $G \in $ \CC, such that $G_1 < G < G_2$.
\end{proof}

We see that the ordered graphs forming a dense order in the Proposition \ref{prop:besticando} can contain $G_2$ with components of order two. However, it is not a superset of ordered graphs defined by Theorem~\ref{thm:order}, as in the case $G_1$ contains a large order component, e.g. $K_m$ with large $m$, $G_2$ needs to contain a component that cannot map to $K_m$. This is not a condition for ordered graphs $G_2$ in Proposition~\ref{prop:besticando}, where it is sufficient that all components of $G_2$ are of order larger than two.

It remains to show a density of ordered graphs containing isolated edges (including ordered matchings). We shall not address it in this article.

We see that the role of ordered matchings is central in the area of ordered graphs and their ordered homomorpshims. In \cite{certik_complexity_2025,certik_matching_2025}, we show that this also applies to the computational complexity of various problems related to ordered homomorphisms.

\bibliographystyle{plain}
\bibliography{references}

\end{document}